\newcommand{\bburl}[1]{\textcolor{blue}{\url{#1}}}
\newcommand{\be}{\begin{equation}}
\newcommand{\ee}{\end{equation}}
\newcommand{\bea}{\begin{eqnarray}}
\newcommand{\eea}{\end{eqnarray}}
\newtheorem{thm}{Theorem}[section]
\newtheorem{conj}[thm]{Conjecture}
\newtheorem{cor}[thm]{Corollary}
\newtheorem{lem}[thm]{Lemma}
\newtheorem{defi}[thm]{Definition}
\newtheorem{rek}[thm]{Remark}
\newcommand{\Z}{\ensuremath{\mathbb{Z}}}
\newcommand{\N}{\mathbb{N}}
\numberwithin{equation}{section}
\providecommand{\floor}[1]{\lfloor #1 \rfloor}
\newcommand{\ph}{g}
\begin{document}

\title{When Sets Can and Cannot Have Sum-Dominant Subsets}

\author{H\`ung Vi\d{\^e}t Chu}
\email{\textcolor{blue}{\href{mailto:chuh19@mail.wlu.edu}{chuh19@mail.wlu.edu}}}
\address{Department of Mathematics, Washington and Lee University, Lexington, VA 24450}

\author{Nathan McNew}
\email{\textcolor{blue}{\href{mailto:nmcnew@towson.edu}{nmcnew@towson.edu}}}
\address{Department of Mathematics, Towson University, Towson, MD 21252}

\author{Steven J. Miller}
\email{\textcolor{blue}{\href{mailto:sjm1@williams.edu}{sjm1@williams.edu}},  \textcolor{blue}{\href{Steven.Miller.MC.96@aya.yale.edu}{Steven.Miller.MC.96@aya.yale.edu}}}
\address{Department of Mathematics and Statistics, Williams College, Williamstown, MA 01267}

\author{Victor Xu}
\email{\textcolor{blue}{\href{mailto:vzx@andrew.cmu.edu}{vzx@andrew.cmu.edu}}}
\address{Department of Mathematics, Carnegie Mellon University, Pittsburgh, PA 15213}

\author{Sean Zhang}
\email{\textcolor{blue}{\href{mailto:xiaoronz@andrew.cmu.edu}{xiaoronz@andrew.cmu.edu}}}
\address{Department of Mathematics, Carnegie Mellon University, Pittsburgh, PA 15213}

\subjclass[2000]{11P99 (primary), 11K99 (secondary).}


\date{\today}

\thanks{The first named author was supported by Washington \& Lee University, and the third named author was partially supported by NSF grants DMS1265673 and DMS1561945. We thank the students from the Math 21-499 Spring '16 research class at Carnegie Mellon and the participants from CANT 2016 for many helpful conversations, Angel Kumchev for comments on an earlier draft, and the referee for many suggestions which improved and clarified the exposition, as well as suggestions on good topics for future research.}

\begin{abstract} A finite set of integers $A$ is a sum-dominant (also called an More Sums Than Differences or MSTD) set if $|A+A| > |A-A|$. While almost all subsets of $\{0, \dots, n\}$ are not sum-dominant, interestingly a small positive percentage are. We explore sufficient conditions on infinite sets of positive integers such that there are either no sum-dominant subsets, at most finitely many sum-dominant subsets, or infinitely many sum-dominant subsets. In particular, we prove no subset of the Fibonacci numbers is a sum-dominant set, establish conditions such that solutions to a recurrence relation have only finitely many sum-dominant subsets, and show there are infinitely many sum-dominant subsets of the primes.
\end{abstract}

\maketitle

\tableofcontents

\section{Introduction}

For any finite set of natural numbers $A \subset \N$, we define the sumset \be A+A \ \coloneqq  \  \{a + a' : a, a' \in A\}\ee and the difference set \be A-A \ \coloneqq \  \{a - a' : a, a' \in A\};\ee $A$ is sum-dominant (also called a More Sums Than Differences or MSTD set) if $|A+A| > |A-A|$ (if the two cardinalities are equal it is called balanced, and otherwise difference-dominant). As addition is commutative and subtraction is not, it was natural to conjecture that sum-dominant sets are rare. Conway gave the first example of such a set, $\{0, 2, 3, 4, 7, 11, 12, 14\}$, and this is the smallest such set. Later authors constructed infinite families, culminating in the work of Martin and O'Bryant, which proved a small positive percentage of subsets of $\{0, \dots, n\}$ are sum-dominant as $n\to\infty$, and Zhao, who estimated this percentage at around $4.5 \cdot 10^{-4}$. See \cite{FP, He, HM, Ma, MO, Na1, Na2, Na3, Ru1, Ru2, Zh3} for general overviews, examples, constructions, bounds on percentages and some generalizations, \cite{MOS, MPR, MS, Zh1} for some explicit constructions of infinite families of sum-dominant sets, and \cite{DKMMW, DKMMWW, MV, Zh2} for some extensions to other settings.

Much of the above work looks at finite subsets of the natural numbers, or equivalently subsets of $\{0, 1, \dots, n\}$ as $n\to\infty$. We investigate the effect of restricting the initial set on the existence of sum-dominant subsets. In particular, given an infinite set $A = \{a_k\}_{=1}^\infty$, when does $A$ have no sum-dominant subsets, only finitely many sum-dominant subsets, or infinitely many sum-dominant subsets? \emph{We assume throughout the rest of the paper that every such sequence $A$ is strictly increasing and non-negative.}

Our first result shows that if the sequence grows sufficiently rapidly and there are no `small' subsets which are sum-dominant, then there are no sum-dominant subsets.

\begin{thm}\label{thm:gen} Let $A = \{a_k\}_{k=1}^\infty$ be a strictly increasing sequence of non-negative numbers. If there exists a positive integer $r$ such that
\begin{enumerate}
	\item $a_k > a_{k-1} + a_{k-r}$ for all $k\ge r+1$, and
	\item $A$ does not contain any sum-dominant set $S$ with $|S| \le 2r-1$,
\end{enumerate}
then $A$ contains no sum-dominant set.
\end{thm}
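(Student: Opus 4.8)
The plan is to argue by (strong) induction on $|S|$ over subsets $S\subseteq A$: given a hypothetical MSTD subset of size $m$, if $m\le 2r+1$ we contradict (2) directly, while if $m\ge 2r+2$ we delete the largest element of $S$ and show this cannot destroy the MSTD property, contradicting the inductive hypothesis.

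The first thing to establish is that condition (1) is inherited by every subset. If $S=\{s_1<s_2<\cdots<s_n\}\subseteq A$ with $s_i=a_{m_i}$, then since $\{a_k\}$ is increasing we have $m_n\ge n$ and $a_{m_n-j}\ge s_{n-j}$ for every $j$, and $m_n\ge r+1$ once $n\ge r+1$; applying (1) at index $m_n$ gives $s_n=a_{m_n}>a_{m_n-1}+a_{m_n-r}\ge s_{n-1}+s_{n-r}$. (In fact $s_i>s_{i-1}+s_{i-r}$ for all $i\ge r+1$, but only this top-gap estimate is needed below.)

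For the inductive step, let $S=\{s_1<\cdots<s_m\}$ with $m\ge 2r+2$, assume every subset of $A$ of size less than $m$ fails to be MSTD, and suppose for contradiction that $S$ is MSTD. Put $S'=S\setminus\{s_m\}$. Any sum or difference of $S$ not already realized inside $S'$ must involve $s_m$, so
\[
|S+S|=|S'+S'|+a,\qquad |S-S|=|S'-S'|+2b,
\]
where $a=\#\{\,1\le j\le m:\ s_m+s_j\notin S'+S'\,\}$ and $b=\#\{\,1\le j\le m-1:\ s_m-s_j\notin S'-S'\,\}$, the factor $2$ coming from the symmetry of $S'-S'$. Trivially $a\le m$. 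The key estimate is $b\ge m-r$: the largest element of $S'-S'$ is $s_{m-1}-s_1$, and for $j\le m-r$ the top-gap bound gives $s_j\le s_{m-r}<s_m-s_{m-1}$, hence $s_m-s_j>s_{m-1}\ge s_{m-1}-s_1$, so each of the $m-r$ distinct positive numbers $s_m-s_1,\dots,s_m-s_{m-r}$ lies outside $S'-S'$ and is therefore new.

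Finally, since $m\ge 2r+2>2r$ we get $a\le m\le 2(m-r)\le 2b$, whence
\[
|S+S|-|S-S|=\bigl(|S'+S'|-|S'-S'|\bigr)+(a-2b)\le |S'+S'|-|S'-S'|.
\]
Because $S$ is MSTD the left-hand side is positive, so $S'$ is MSTD; but $|S'|=m-1<m$, contradicting the inductive hypothesis, and the induction closes. The only place (1) enters is the bound $b\ge m-r$ — removing the maximum of $S$ kills at most $m$ sums but, because $s_m-s_{m-1}>s_{m-r}$, at least $2(m-r)$ differences — and I expect this, together with correctly tracking the ranges ($j\le m-r$, $m\ge r+1$, $m\ge 2r$), to be the one point needing care; the rest is elementary sumset/difference-set bookkeeping.
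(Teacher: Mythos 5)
Your proof is correct and follows essentially the same route as the paper: strong induction, deleting the largest element $s_m$, bounding the new sums by $m$, and using the gap condition $s_m - s_{m-1} > s_{m-r}$ to produce at least $2(m-r) \ge m$ new differences exceeding the old maximum, so that removing $s_m$ cannot destroy the MSTD property. The only cosmetic differences are that you induct on $|S|$ rather than on the index of the largest element and count $m-r$ one-sided new differences where the paper counts $\lfloor (k+3)/2\rfloor$; both hinge on exactly the same inequality chain.
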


We prove this in \S\ref{sec:noMSTD}. As the smallest sum-dominant set has 8 elements (see \cite{He}), the second condition is trivially true if $r \le 4$. In particular, we immediately obtain the following interesting result.

\begin{cor} No subset of the Fibonacci numbers $\{0, 1, 2, 3, 5, 8, \dots\}$ is a sum-dominant set. \end{cor}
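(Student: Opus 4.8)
The plan is to deduce this directly from Theorem~\ref{thm:gen} by taking $r = 3$. Index the set as $A = \{a_k\}_{k=1}^\infty$ with $a_1 = 0$, $a_2 = 1$, $a_3 = 2$, and $a_k = a_{k-1} + a_{k-2}$ for all $k \ge 4$, so that $A = \{0, 1, 2, 3, 5, 8, 13, \dots\}$; note that since $A$ is a \emph{set}, the usual repeated initial $1$ appears only once, which is why the recurrence is imposed only from $k = 4$ onward.

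First I would verify hypothesis (1) of Theorem~\ref{thm:gen} with this $r$. For $k \ge r + 1 = 4$ the recurrence gives $a_k = a_{k-1} + a_{k-2}$, so the required inequality $a_k > a_{k-1} + a_{k-3}$ is equivalent to $a_{k-2} > a_{k-3}$. Since $a_1 = 0 < a_2 = 1 < a_3 = 2$ and, for $j \ge 4$, $a_j = a_{j-1} + a_{j-2} > a_{j-1}$ (the term $a_{j-2}$ being a positive integer), the sequence is strictly increasing, and hence $a_{k-2} > a_{k-3}$ for every $k \ge 4$. Thus hypothesis (1) holds.

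Next, hypothesis (2) requires that $A$ contain no MSTD set of cardinality at most $2r + 1 = 7$. This is immediate and needs nothing specific about the Fibonacci numbers: by \cite{He} every MSTD set has at least $8$ elements, so no set of size $\le 7$ is MSTD at all. With both hypotheses confirmed, Theorem~\ref{thm:gen} gives that $A$ has no MSTD subset, which is the assertion of the corollary.

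There is no genuinely hard step here; the only points requiring care are (a) getting the indexing right so that the single occurrence of $1$ is not double-counted, and (b) checking the \emph{strict} inequality $a_{k-2} > a_{k-3}$, i.e.\ that the Fibonacci sequence as listed is strictly increasing. It is also worth remarking that $r = 2$ will not work, since the Fibonacci recurrence gives $a_k = a_{k-1} + a_{k-2}$ with equality rather than the strict inequality hypothesis (1) demands; $r = 3$ is the smallest valid choice, and it is precisely what renders hypothesis (2) vacuous.
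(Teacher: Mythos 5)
Your proposal is correct and follows exactly the paper's route: apply Theorem~\ref{thm:gen} with $r=3$, note $F_k = F_{k-1}+F_{k-2} > F_{k-1}+F_{k-3}$ so hypothesis (1) holds, and observe hypothesis (2) is vacuous since every MSTD set has at least $8 > 2r+1 = 7$ elements by \cite{He}. Your extra care with the indexing of the initial terms $0,1,2$ and the remark that $r=2$ fails (equality, not strict inequality, in the Fibonacci recurrence) are sensible elaborations of the same argument.
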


The proof is trivial, and follows by taking $r=3$ and noting \be F_k \ = \ F_{k-1} + F_{k-2} \ > \ F_{k-1} + F_{k-3} \ee for $k \ge 4$.

After defining a class of subsets we present a partial result on when there are at most finitely many sum-dominant subsets.

\begin{defi}[Special Sum-Dominant Set] For a sum-dominant set $S$, we call $S$ a special sum-dominant set if $|S+S| - |S-S| \ge |S|$.  \end{defi}

We prove sum-dominant sets exist in \S\ref{sec:specialsumdom}. Note if $S$ is a special sum-dominant set then if $S' = S \cup \{x\}$ for any sufficiently large $x$ then $S'$ is also a sum-dominant set. We have the following result about a sequence having at most finitely many sum-dominant sets (see \S\ref{sec:finitelymany} for the proof).

\begin{thm}\label{thm:finite} Let $A = \{a_k\}_{k=1}^\infty$ be a strictly increasing sequence of non-negative numbers. If there exists a positive integer $s$ such that the sequence $\{a_k\}$ satisfies
\begin{enumerate}
	\item $a_k > a_{k-1} + a_{k-3}$ for all $k\ge s$, and
	\item $\{a_1, \dots, a_{4s+6}\}$ has no special sum-dominant subsets,
\end{enumerate}
then $A$ contains at most finitely many sum-dominant sets.
\end{thm}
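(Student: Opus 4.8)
The plan is to reduce Theorem~\ref{thm:finite} to Theorem~\ref{thm:gen} by a pigeonhole/finiteness argument on the ``small'' part of a would-be large MSTD subset. Suppose, for contradiction, that $A$ contains infinitely many MSTD sets. Let $A_0 := \{a_k : k \ge s\}$ be the tail on which the recurrence $a_k > a_{k-1} + a_{k-3}$ holds, and note that $A \setminus A_0$ is finite. First I would observe that by Theorem~\ref{thm:gen} (with $r = 3$, using that the smallest MSTD set has $8 > 2\cdot 3 + 1 = 7$ elements so condition~(2) there is automatic) applied to the sequence $A_0$, the set $A_0$ has \emph{no} MSTD subset at all. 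Hence every MSTD subset $S$ of $A$ must intersect the finite set $A \setminus A_0$ nontrivially; but that alone is not enough to bound the number of such $S$, so the heart of the argument is to control how $S$ can extend into $A_0$.

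The key structural step is the following. Write $S = T \sqcup U$ where $T = S \cap (A\setminus A_0)$ (from a finite set, so finitely many choices) and $U = S \cap A_0$. I would show that if $S$ is MSTD and $U$ is ``long enough'' then, after discarding all but boundedly many of the largest elements of $U$, the remaining set is \emph{special} — i.e. its sum/difference surplus is at least its size — contradicting hypothesis~(2). Concretely: order $S = \{s_1 < \dots < s_n\}$ and consider adding the largest elements of $A_0$ one at a time. Using the super-additivity $a_k > a_{k-1} + a_{k-3}$, adjoining a sufficiently large new top element $x$ to a set $S'$ increases $|S'+S'|$ by exactly $|S'|+1$ (the new sums $x + s$, $s \in S'$, and $2x$, are all distinct and larger than everything in $S'+S'$) while increasing $|S'-S'|$ by exactly $2|S'|$ (the new differences $\pm(x-s)$, and $0$ already present). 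So each such adjunction \emph{decreases} the surplus $|S'+S'| - |S'-S'|$ by exactly $|S'| - 1 \ge$ some positive amount that grows — this is the opposite of what we want, so I must instead run this in reverse: a large MSTD set, when we \emph{remove} its top elements one at a time, has its surplus \emph{increase}. After removing the appropriate number of top elements we reach a subset $S^\ast$ with surplus $\ge |S^\ast|$, i.e. a special MSTD set, unless the surplus was already killed (became $\le 0$) before we got there. Quantifying this: if $S$ has surplus $d = |S+S|-|S-S| \ge 1$ and $|S| = n$, removing the top element changes the surplus by $+(m-1)$ where $m$ is the current size, so after removing top elements down to size $m_0$ the surplus is $d + \sum_{m=m_0}^{n-1}(m-1)$, which is $\ge m_0$ as soon as $m_0$ is bounded by an absolute constant (since the sum is already $\ge m_0$ for, say, $m_0 \ge 3$ and $n \ge m_0 + 1$). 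The conclusion is that any MSTD subset of $A$ has size bounded by an absolute constant $C$ (otherwise removing top elements produces a special MSTD set in $A$).

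Finally, once every MSTD subset of $A$ has size at most $C$, I would note there are only finitely many of them: each is determined by a subset of size $\le C$ of $A$, but the elements of such a subset that lie in $A_0$ must themselves be bounded — because if an MSTD set $S \subseteq A$ with $|S| \le C$ contained an element $a_k \in A_0$ with $k$ large, then... here one needs a little care, since ``bounded size'' does not immediately give ``bounded elements.'' The cleanest finish is to strengthen the previous step: show that for $S$ MSTD with $|S| \le C$, the top element cannot be ``too large'' relative to the rest, because if it is, removing it leaves an MSTD set (the surplus only goes up), and iterating shrinks $S$ below size $8$, impossible. Combined with the recurrence forcing $A_0$ to be sparse, this bounds all elements of $S$, hence there are only finitely many possible $S$. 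The main obstacle I anticipate is precisely this last bookkeeping: pinning down the constant $C$ and verifying that ``remove the top element'' genuinely preserves the MSTD property for all sufficiently large top elements in the tail $A_0$ — this needs the recurrence $a_k > a_{k-1}+a_{k-3}$ to guarantee that the new top element's sums and differences with the rest of $S$ are all disjoint from the old sumset and difference set, which is exactly the kind of estimate carried out in the proof of Theorem~\ref{thm:gen} and should transfer with minor modification.
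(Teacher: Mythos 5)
Your overall strategy (peel the largest tail elements off a putative MSTD set, drive it toward a special MSTD set, and invoke hypothesis (2)) is close in spirit to the paper's proof, which likewise works by removing top elements via a lemma showing the deficit $|S-S|-|S+S|$ strictly increases. However, two of your key steps do not hold as stated. First, the recurrence $a_k > a_{k-1}+a_{k-3}$ does \emph{not} yield the exact counts you assert: adjoining a new top element $x$ to $S'$ need not create exactly $|S'|+1$ new sums all exceeding $\max(S'+S')$ (for instance $x+s_1$ can well be smaller than $2s_{|S'|}$), nor exactly $2|S'|$ new differences. Those exact counts require $x \ge 2\sum_{s\in S'}|s|$, which the recurrence does not supply. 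What the recurrence gives (as in the proof of Theorem \ref{thm:gen}) is only that at least $k+1$ of the new differences are genuinely new versus at most $k+1$ new sums, i.e.\ the surplus drops by at least one per step, not by $m-1$. Your ``special set after boundedly many removals'' conclusion therefore fails; one would instead have to remove on the order of half the elements, while also checking at each stage that the lemma's hypotheses (at least $8$ elements in total and at least $3$ remaining tail elements) still hold. This is repairable, but the bookkeeping changes substantially.

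Second, and more seriously, your final step --- bounding the \emph{elements} of the (boundedly sized) MSTD sets --- is exactly where most of the work in the paper lies, and your sketch does not close it. The removal argument applies only while the set retains several elements of the tail; once a candidate MSTD set has only one, two, or three tail elements (which may sit arbitrarily far out in $A_0$), you can no longer ``iterate removal down below size $8$.'' The paper handles precisely these residual cases with separate, nontrivial arguments: for one tail element it uses that $S\subseteq A_1$ is not special to rule out $S\cup\{a_{r_1}\}$ for $a_{r_1}$ large; for three tail elements it shows $S_3+S_3$ splits into $4$ translates of $S+S$ while $S_3-S_3$ splits into $7$ translates of $S-S$, and verifies $4|S+S| < 7|S-S|$ via $|S+S|/|S-S| \le 14/9 < 7/4$. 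Nothing in your proposal substitutes for this analysis; ``the recurrence forcing $A_0$ to be sparse \dots\ bounds all elements of $S$'' is an assertion, not a proof. As written, the proposal has a genuine gap here.
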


The above results concern situations where there are not many sum-dominant sets; we end with an example of the opposite behavior.

\begin{thm}\label{thm:prime} There are infinitely many sum-dominant subsets of the primes. \end{thm}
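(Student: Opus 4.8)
The plan is to bootstrap from a single known MSTD set of integers to infinitely many MSTD \emph{subsets of the primes}, using a Chinese Remainder Theorem argument together with the fact that primes are equidistributed in coprime residue classes (Dirichlet's theorem, or more safely the Dickson/Hardy--Littlewood type statements one can make unconditional here). Concretely, fix a ``base'' MSTD set $B = \{b_1 < b_2 < \dots < b_m\} \subset \Z_{\ge 0}$ with $|B+B| > |B-B|$; for instance Conway's set $\{0,2,3,4,7,11,12,14\}$. The idea is to produce, for every scaling parameter $N$ in an infinite family, a set $P_N = \{p_1, \dots, p_m\}$ of $m$ distinct primes such that $p_i \equiv b_i \pmod{N}$ for a suitably chosen modulus $N$, and such that the $p_i$ are spaced far enough apart that the sumset and difference set of $P_N$ are ``controlled'' by the residues $b_i \bmod N$ in the low-order part and by genuine distinctness in the high-order part.

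The key steps, in order: First, I would make precise the ``controlled'' statement. Choose the $p_i$ so that $p_i = b_i + N c_i$ with $0 = c_1 < c_2 < \dots < c_m$ and with the gaps $c_{i+1} - c_i$ growing fast enough (think of a Fibonacci-type or geometric growth, invoking the spirit of Theorem~\ref{thm:gen}) that all the pairwise sums $p_i + p_j$ are distinct \emph{except} for coincidences forced by $b_i + b_j = b_k + b_\ell$, and likewise for differences. More carefully: I want the map $(i,j) \mapsto (c_i + c_j, \, b_i + b_j)$ to lose exactly the collisions that $B$ itself loses, i.e. $|P_N + P_N| = |B+B|$ and $|P_N - P_N| = |B-B|$. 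This is arranged by taking the $c_i$ to be, say, widely separated integers (spacing exceeding $2\max b_i / N$, trivial once $c_i$ grow) so that $p_i + p_j$ and $p_{i'}+p_{j'}$ coincide iff $c_i + c_j = c_{i'}+c_{j'}$ \emph{and} $b_i + b_j = b_{i'}+b_{j'}$; then further picking the $c_i$ Sidon-like (a $B_2$ set, or just super-increasing) so the first condition never fires unless $\{i,j\}=\{i',j'\}$. Hence $|P_N+P_N| = |B+B| > |B-B| = |P_N - P_N|$, so $P_N$ is MSTD.

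Second, I must actually \emph{find} the primes. I would fix a modulus $N$ coprime to all the $b_i - b_j$ and $b_i + b_j$ that I need to keep separated (or handle those congruence obstructions directly), and seek primes in the arithmetic progressions $b_i + N\Z$. Since each $b_i$ and $N$ must be coprime for Dirichlet to apply, I would first replace $B$ by a translate/dilate $B' = u B + v$ chosen so that every element of $B'$ is coprime to $N$ — dilation by a unit and translation preserve the MSTD property. Then Dirichlet's theorem gives infinitely many primes $\equiv b_i' \pmod N$ in each class, and I greedily select them one index at a time with $c_1 < c_2 < \dots$ super-increasing: having chosen $p_1, \dots, p_{i-1}$, there are infinitely many primes $\equiv b_i' \pmod N$, so I may pick $p_i$ as large as I please, in particular with $c_i$ exceeding whatever growth bound the separation argument demands. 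Varying the starting scale (e.g. requiring $p_1 > T$ for each $T \in \N$) produces infinitely many \emph{distinct} such sets $P_N$, each a subset of the primes and each MSTD.

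The main obstacle I expect is the bookkeeping in step one: verifying that the super-increasing (or Sidon) choice of the $c_i$ genuinely forces $|P_N+P_N| = |B+B|$ and $|P_N - P_N| = |B-B|$ with no spurious extra collisions \emph{and} no spurious extra elements — i.e. that lifting $B$ to widely-spaced primes neither creates new sumset coincidences nor destroys the ones $B$ has. This is where the growth hypothesis of Theorem~\ref{thm:gen} is morally being used in reverse: we want just enough separation that the high-order digits (the $c_i$) behave like a sum-dominated-free ``generic'' sequence while the low-order digits (the $b_i \bmod N$) carry exactly the MSTD imbalance of $B$. A clean way to organize this is a two-scale / base-$N$ digit argument: write every element of $P_N \pm P_N$ as $Nc + b$ with $b$ in a bounded window, argue the $c$-part and $b$-part decouple once $N$ exceeds $2\max_i b_i$ and the $c_i$ are super-increasing, and then count. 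A secondary, minor obstacle is ensuring the coprimality/congruence conditions needed for Dirichlet can always be met simultaneously; this is handled by the dilation-and-translation normalization of $B$, at the cost of only enlarging the modulus $N$.
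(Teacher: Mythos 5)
There is a genuine gap, and it sits exactly at the step you flag as ``bookkeeping'': choosing the high-order parts $c_i$ to be Sidon or super-increasing does not preserve the MSTD property of $B$ --- it destroys it. With your decoupling, a collision $p_i \pm p_j = p_{i'} \pm p_{j'}$ requires \emph{both} $c_i \pm c_j = c_{i'} \pm c_{j'}$ and $b_i \pm b_j = b_{i'} \pm b_{j'}$. If the $c_i$ form a Sidon set, the first condition never holds for distinct pairs, so $P_N$ has \emph{no} coincidences at all: $|P_N+P_N| = \binom{m+1}{2}$ and $|P_N-P_N| = m^2-m+1$. For $m \ge 3$ one has $m^2-m+1 > \binom{m+1}{2}$, so $P_N$ is maximally difference-dominated --- the opposite of MSTD. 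In particular your claimed conclusion $|P_N+P_N| = |B+B|$ contradicts your own construction ($36 \ne 26$ for Conway's set). The point is that an MSTD set is MSTD precisely \emph{because} of its many coincidences among differences ($57$ collapsing to $25$ for Conway's set); any lift to the primes must reproduce those coincidences exactly, which forces the $c_i$ to carry the same additive structure as the $b_i$, i.e.\ to be (essentially) an affine image $c_i = \lambda b_i + \mu$. Once you make that repair, your construction collapses to finding primes of the form $p + b_i d$ for all $i$ simultaneously.

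That repaired version is exactly what the paper does, and it is why the unconditional proof goes through Green--Tao rather than Dirichlet: Dirichlet in a single modulus cannot force the eight primes to lie in a common arithmetic progression, whereas Green--Tao supplies, for each $N \ge 14$, infinitely many pairs $(p,d)$ with $\{p, p+d, \dots, p+Nd\}$ all prime, from which one extracts $\{p + b_i d : b_i \in B\}$; an affine map preserves $|A+A|$ and $|A-A|$ exactly, so each such subset is MSTD. (The paper's conditional route via Hardy--Littlewood for the single admissible $8$-tuple $(0,60,90,120,210,330,360,420)$ plays the same role with better quantitative control.) As written, your argument does not prove the theorem; the Sidon separation step would need to be replaced by the affine-image step, at which point the need for long prime progressions --- and hence Green--Tao or a Hardy--Littlewood-type input --- becomes unavoidable.
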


We will see later that this result follows immediately from the Green-Tao Theorem \cite{GT}, which asserts that the primes contain arbitrarily long progressions. We also give a conditional proof in \S\ref{sec:primes}. There we assume the Hardy-Littlewood conjecture (see Conjecture \ref{conj:HL}) holds. The advantage of such an approach is that we have an explicit formula for the number of the needed prime tuples up to $x$, which gives a sense of how many such solutions exist in a given window.

\section{Subsets with no sum-dominant sets}\label{sec:noMSTD}

We prove Theorem \ref{thm:gen}, establishing a sufficient condition to ensure the non-existence of sum-dominant subsets.

\begin{proof}[Proof of Theorem \ref{thm:gen}]
Let $S=\{s_1,s_2,\dots, s_k\} = \{a_{g(1)}, a_{g(2)}, \dots, a_{g(k)}\}$ be a finite subset of $A$, where $g : \Z^+ \to \Z^+$ is an increasing function. We show that $S$ is not a sum-dominant set by strong induction on $g(k)$.

We proceed by induction. We show that if $A$ has no sum-dominant subsets of size $k$, then it has no sum-dominant subsets of size $k+1$; as any sum-dominant set has only finitely many elements, this completes the proof. 

For the Basis Step, we know (see \cite{He}) that all sum-dominant sets have at least 8 elements, so any subset $S$ of $A$ with exactly $k$ elements is not a sum-dominant set if $k \le 7$; in particular, $S$ is not a sum-dominant set if $g(k) \le 7$. Thus we may assume for $g(k) \ge 8$ that all $S'$ of the form $\{s_1,\dots, s_{k-1}\}$ with $s_{k-1} < a_{g(k)}$ are not sum-dominant sets. The proof is completed by showing \begin{equation} S \ = \ S' \cup \{a_{g(k)}\} \ =\  \{s_{1},\dots, s_{k-1}, a_{g(k)}\}\end{equation} is not sum-dominant sets for any $a_{g(k)}$.

We now turn to the Inductive Step. We know that $S'$ is not a sum-dominant set by the inductive assumption. Also, if $k \le 2r-1$ then $|S| \le 2r-1$ and $S$ is not a sum-dominant set by the second assumption of the theorem. If $k \ge 2r$, consider the number of new sums and differences obtained by adding $a_{g(k)}$. As we have at most $k$ new sums, the proof is completed by showing there are at least $k$ new differences.

Since $k \ge 2r$, we have $k - \floor{\frac{k+1}{2}} \ge r$. Let $t = \floor{\frac{k+1}{2}}$. Then $t \le k - r$, which implies $s_{t} \le s_{k-r}$. The largest difference in absolute value between elements in $S$ is $s_{k-1}-s_1$; we now show that we have added at least $k+1$ distinct differences greater than $s_{k-1}-s_1$ in absolute value, which will complete the proof. We have
\begin{align}
a_{g(k)} - s_{t} &\ \ge \ a_{g(k)} - s_{k-r} \ = \ a_{g(k)} - a_{g(k-r)}  \nonumber\\
                   &\ \ge \ a_{g(k)} - a_{g(k)-r} \nonumber\\
                   &\  >  \ a_{g(k)-1}-a_1 & \text{(by the first assumption on $\{a_n\}$)} \nonumber\\
                   &\ \ge \ s_{k-1} - a_{1} \ \ge \  s_{k-1} - s_{1}.
\end{align}
Since $a_{g(k)} - s_{t} \ge s_{k-1}-s_1$, we know that $$a_{g(k)} - s_{t},\ \dots,\ a_{g(k)} - s_{2},\ a_{g(k)} - s_{1}$$ are $t$ differences greater than the greatest difference in $S'$. As we could subtract in the opposite order, $S$ contains at least \begin{equation} 2t\ =\ 2\left\lfloor\frac{k+1}{2}\right\rfloor\ \ge \ k\end{equation} new differences. Thus $S+S$ has at most $k$ more sums than $S'+S'$  but $S-S$ has at least $k$ more differences compared to $S'-S'$. Since $S'$ is not a sum-dominant set, we see that $S$ is not a sum-dominant set.
\end{proof}

\begin{rek} We thank the referee for the following alternative proof. Given any infinite increasing sequence $\{a_{g(i)}\}$ that is a subset of a set $A$ satisfying $a_k > a_{k_1} + a_{k-r}$ for all $k > r$,  let $S_k = \{a_{g(1)}, \dots, a_{g(k)}\}$ and $\Delta_k = |S_k - S_k| - |S_k + S_k|$. Similar arguments as above show that $\{\Delta_k\}$ is increasing for $k \ge 2r$. \end{rek}

We immediately obtain the following.

\begin{cor}\label{cor:abs} Let $A = \{a_k\}_{k=1}^\infty$ be a strictly increasing sequence of non-negative numbers.
If $a_k > a_{k-1} + a_{k-4}$ for all $k\ge 5$, then $A$ contains no sum-dominant subsets.
\end{cor}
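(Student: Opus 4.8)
The statement to prove is Corollary~\ref{cor:abs}: if $a_k > a_{k-1} + a_{k-3}$ for all $k \ge 4$, then $A$ contains no MSTD subsets. The plan is to derive this directly from Theorem~\ref{thm:gen} by checking its two hypotheses with the specific choice $r = 3$. First I would verify hypothesis (1): we need $a_k > a_{k-1} + a_{k-r}$ for all $k \ge r+1$; with $r = 3$ this reads $a_k > a_{k-1} + a_{k-3}$ for all $k \ge 4$, which is precisely the assumption given. So hypothesis (1) transfers immediately with no work.

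Next I would verify hypothesis (2): with $r = 3$ we need $A$ to contain no MSTD set $S$ with $|S| \le 2r + 1 = 7$. This is where I would invoke the result of Harborth (cited as \cite{He} in the excerpt) that every MSTD set has at least $8$ elements. Since any $S \subseteq A$ with $|S| \le 7$ is therefore not an MSTD set, hypothesis (2) holds vacuously. The paper already makes exactly this observation in the remark following Theorem~\ref{thm:gen} ("the second condition is trivially true if $r \le 3$"), so the argument is just to spell it out.

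With both hypotheses of Theorem~\ref{thm:gen} satisfied for $r = 3$, the theorem's conclusion gives that $A$ contains no MSTD set, which is exactly the claim. I do not expect any genuine obstacle here: the corollary is a direct specialization of the theorem, and the only external input is the known lower bound of $8$ on the size of an MSTD set. The one thing to be careful about is matching the index ranges correctly — confirming that "$k \ge r+1$" with $r=3$ is the same as "$k \ge 4$" — but this is routine.

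\begin{proof}[Proof of Corollary~\ref{cor:abs}]
Apply Theorem~\ref{thm:gen} with $r = 3$. Condition (1) of the theorem requires $a_k > a_{k-1} + a_{k-r}$ for all $k \ge r+1$; for $r = 3$ this is exactly the hypothesis $a_k > a_{k-1} + a_{k-3}$ for all $k \ge 4$. Condition (2) requires that $A$ contain no MSTD set $S$ with $|S| \le 2r + 1 = 7$; this holds automatically since, as shown in \cite{He}, every MSTD set has at least $8$ elements. Both hypotheses of Theorem~\ref{thm:gen} are therefore met, and we conclude that $A$ contains no MSTD subset.
\end{proof}
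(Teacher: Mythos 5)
Your proof is correct and is essentially identical to the paper's: both apply Theorem~\ref{thm:gen} with $r=3$, noting that condition (1) is exactly the hypothesis and that condition (2) holds vacuously because every MSTD set has at least $8$ elements (citing \cite{He}). Nothing further is needed.
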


\begin{proof}
From \cite{He} we know that all sum-dominant sets have at least 8 elements. When $r = 4$ the second condition of Theorem \ref{thm:gen} holds, completing the proof.
\end{proof}

For another example, we consider shifted geometric progressions.

\begin{cor}\label{thm:geo}
Let $A = \{a_k\}_{k=1}^\infty$ with $a_k = c \rho^k + d$ for all $k\ge 1$, where $0 \neq c \in \N$, $d\in\N$, and $1 < \rho \in \N$. Then $A$ contains no sum-dominant subsets.
\end{cor}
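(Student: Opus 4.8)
The plan is to deduce this from Corollary \ref{cor:abs} after reducing the shifted geometric progression to a pure one. The key point is that whether a finite set of integers is MSTD is invariant under the affine maps $x \mapsto \lambda x + \mu$ with $\lambda \in \Z \setminus \{0\}$ and $\mu \in \Z$: for a finite $S \subset \Z$ one has $(S+\mu)+(S+\mu) = (S+S) + 2\mu$ and $(S+\mu)-(S+\mu) = S-S$, and $\lambda S + \lambda S = \lambda(S+S)$, $\lambda S - \lambda S = \lambda(S-S)$, so neither $|S+S|$ nor $|S-S|$ changes, hence $S$ is MSTD if and only if $\lambda S + \mu$ is.

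I would then apply this to the bijection $T \mapsto cT + d$ between the finite subsets of $B := \{r^k\}_{k=1}^\infty$ and the finite subsets of $A = \{cr^k + d\}_{k=1}^\infty$ (well-defined and a bijection since $c \neq 0$): a finite $T \subseteq B$ is MSTD exactly when $cT + d \subseteq A$ is, and every finite subset of $A$ arises this way. Hence $A$ has an MSTD subset if and only if $B$ does, so it suffices to show that $B$ has none.

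For this I would invoke Corollary \ref{cor:abs} applied to the sequence $b_k := r^k$: its hypothesis $b_k > b_{k-1} + b_{k-3}$ for $k \ge 4$ reads $r^k > r^{k-1} + r^{k-3}$, i.e. $r^3 > r^2 + 1$ after dividing by $r^{k-3}$, which holds because $r \ge 2$ forces $r^3 - r^2 - 1 = r^2(r-1) - 1 \ge 3 > 0$. Thus $B$, and therefore $A$, contains no MSTD subset. There is no genuine obstacle here; the only thing worth flagging is that Corollary \ref{cor:abs} cannot be applied to $A$ directly, since $a_k > a_{k-1} + a_{k-3}$ is equivalent to $c\,r^{k-3}(r^3 - r^2 - 1) > d$, which fails for small $k$ once $d$ is large — so the reduction to $B$ is essential rather than merely cosmetic.
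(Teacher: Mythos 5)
Your proof is correct and follows essentially the same route as the paper's one-line argument: reduce via the affine map $x \mapsto cx + d$ (under which $|S+S|$ and $|S-S|$ are invariant) to the pure geometric progression $\{r^k\}$, and then apply Corollary \ref{cor:abs} using the ``simple algebra'' $r^3 > r^2 + 1$ for $r \ge 2$. You have merely made explicit the details the paper leaves implicit, including the worthwhile observation that the hypothesis of Corollary \ref{cor:abs} can fail for $A$ itself when $d$ is large, so the reduction is genuinely needed.
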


\begin{proof} Without loss of generality we may shift and assume $d=0$ and $c=1$; the result now follows immediately from simple algebra. \end{proof}

\begin{rek} Note that if $\rho$ is an integer greater than the positive root of $x^4 - x^3 - 1$ (the characteristic polynomial associated to $a_k = a_{k-1} + a_{k-4}$ from Theorem \ref{thm:finite}, which is approximately 1.3803) then the above corollary holds for $\{c \rho^k + d\}$. 
\end{rek}


\section{Subsets with Finitely Many sum-dominant Sets}\label{sec:finitelymany}

We start with some properties of special sum-dominant sets, and then prove Theorem \ref{thm:finite}. The arguments are similar to those used in proving Theorem \ref{thm:gen}. \emph{In this section, in particular in all the statements of the lemmas, we assume the conditions of Theorem \ref{thm:finite} hold.} Thus $A = \{a_k\}_{k=1}^\infty$ and there is an integer $s$ such that the sequence $\{a_k\}$ satisfies
\begin{enumerate}
	\item $a_k > a_{k-1} + a_{k-3}$ for all $k\ge s$, and
	\item $\{a_1, \dots, a_{4s+6}\}$ has no special sum-dominant subsets.
\end{enumerate}

\subsection{Special Sum-Dominant Sets}\label{sec:specialsumdom}

Recall a sum-dominant set $S$ is special if $|S+S| - |S-S| \ge |S|$. For any $x \ge \sum_{a\in S} a$, adding $x$ creates $|S|+1$ new sums and $2|S|$ new differences. Let $S^* = S\cup \{x\}$. Then \begin{equation} |S^* + S^*| - |S^* - S^*|\ \ge\ |S| + (|S|+1) - 2|S|\ =\ 1,\end{equation} and $S^*$ is also a sum-dominant set. Hence, from one special sum-dominant set $S \subset \{a_n\}_{n=1}^\infty =: A$, we can generate infinitely many sum-dominant sets by adding any large integer in $A$. We immediately obtain the following converse.

\begin{lem}\label{lem:appendingelementtonotspecial}  If a set $S$ is not a special sum-dominant set, then $|S+S| - |S-S| < |S|$, and by adding any large $x \ge \sum_{a\in S} a$, $S\cup \{x\}$ has at least as many differences as sums. Thus only finitely many sum-dominant sets can be generated by appending one integer from $A$ to a non-special sum-dominant set $S$. \end{lem}

Note that special sum-dominant sets exist. We use the base expansion method (see \cite{He}), which states that given a set $A$, for all $m$ sufficiently large if \be A_t \ = \ \left\{\sum_{i=1}^t a_i m^{i-1}: a_i \in A\right\} \ee then \be |A_t \pm A_t| \ = \ |A \pm A|^t; \ee the reason is that for $m$ large the various elements are clustered with different pairs of clusters yielding well-separated sums.  To construct the desired special sum-dominant set, consider the smallest sum-dominant set $S = \{0, 2, 3, 4$, $7$, $11$, $12, 14\}$. Using the method of base expansion, taking $m = 10^{2017}$ we obtain $S_3$ containing $|S_3| = 8^3 = 512$ elements such that $|S_3 + S_3| = |S + S|^3 = 26^3 = 17576$ and $|S_3 - S_3| = |S - S|^3 = 25^3 = 15625$. Then $|S_3 + S_3| - |S_3 - S_3| > |S_3|$.

\subsection{Finitely Many Sum-Dominant Sets on a Sequence}

If a sequence $A = \{a_n\}_{n=1}^\infty$ contains a special sum-dominant set $S$, then we can get infinitely many sum-dominant subsets on the sequence just by adding sufficiently large elements of $A$ to $S$. Therefore for a sequence $A$ to have at most finitely many sum-dominant subsets, it is necessary that it has no special sum-dominant sets. Using the result from the previous subsection, we can prove Theorem \ref{thm:finite}.

We establish some notation before turning to the proof in the next subsection. We can write $A$ as the union of $A_1 = \{a_1, \dots, a_{s-1}\}$ and $A_2 = \{a_{s}, a_{s+1}, \dots \}$. By Corollary \ref{cor:abs}, we know that $A_2$ contains no sum-dominant sets. Thus any sum-dominant set must contain some elements from $A_1$.


We prove a lemma about $A_2$.

\begin{lem}\label{lem:add}
Let $S' = \{s_1, \dots, s_{k-1}\}$ be a subset of $A$ containing at least 3 elements $a_{r_1}, a_{r_2}, a_{r_3}$ in $A_2$, with $r_3 > r_2 > r_1$. Consider the index $\ph(k) > r_3$, and let $S = S' \cup \{a_{\ph(k)}\}$. Then either $S$ is not a sum-dominant set, or $S$ satisfies $|S-S| - |S+S| > |S' - S'| - |S' + S'|$. Thus the excess of sums to differences from $S$ is \emph{less than} the excess from $S'$.
\end{lem}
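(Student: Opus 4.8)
The plan is to mimic the counting argument from the proof of Theorem~\ref{thm:gen}, tracking the difference between new differences and new sums rather than just comparing each quantity to $k+1$. Adding $a_{\ph(k)}$ to $S'$ creates at most $k+1$ new sums (the elements $a_{\ph(k)} + s_j$ for $j = 1, \dots, k-1$ together with $2a_{\ph(k)}$) and a number of new differences that I want to bound below by at least $k+1$ again — and crucially I want the inequality to be strict, i.e.\ at least $k+2$ new differences, whenever $S$ is an MSTD set. Then $|S-S| - |S+S| \ge (|S'-S'| + (k+2)) - (|S'+S'| + (k+1)) > |S'-S'| - |S'+S'|$, which is exactly the conclusion.

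\medskip

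First I would use the hypothesis that $S'$ contains three elements $a_{r_1}, a_{r_2}, a_{r_3}$ of $A_2$ with $r_3 > r_2 > r_1 \ge s$. Because these indices lie in the range where $a_k > a_{k-1} + a_{k-3}$ holds, and because $\ph(k) > r_3$, the element $a_{\ph(k)}$ dominates the others in the strong sense needed: I would show, exactly as in the chain of inequalities in the proof of Theorem~\ref{thm:gen}, that $a_{\ph(k)} - a_{r_3} > a_{\ph(k)-1} \ge$ (the largest difference occurring in $S'$). The three elements $a_{r_1}, a_{r_2}, a_{r_3}$ play the role that $s_{k-r}, \dots$ played before: they guarantee that $a_{\ph(k)} - a_{r_j}$ for $j=1,2,3$ are three distinct positive differences each exceeding every difference already present in $S'-S'$, hence genuinely new, and their negatives give three more. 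So adding $a_{\ph(k)}$ always produces at least $6$ new differences but this alone is not enough; I need the count to beat $k+1$.

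\medskip

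The key step, then, is to get a better lower bound on new differences using \emph{all} of $S'$, not just the three anchor elements. Here is where I would split into cases. The $k-1$ differences $a_{\ph(k)} - s_j$ ($j = 1, \dots, k-1$) are automatically distinct from each other; the question is how many coincide with old differences in $S'-S'$ or with the reversed new differences $s_j - a_{\ph(k)}$. If enough of them ($\ge (k+2)/2$ of the positive ones) exceed $\max(S'-S')$ then we are done as before. Otherwise many of the $s_j$ are large relative to $a_{\ph(k)}$, i.e.\ $a_{\ph(k)} - s_j$ is small; but in that regime $a_{\ph(k)} + s_j$ is large, and I can argue that correspondingly many of the new \emph{sums} $a_{\ph(k)} + s_j$ coincide with each other or fail to be new relative to... actually the cleaner route: when the conclusion $|S-S| - |S+S| > |S'-S'| - |S'+S'|$ fails, it must be that the number of new sums strictly exceeds the number of new differences, so in particular $S+S$ gained close to its maximum $k+1$ while $S-S$ gained few; I would show this forces most $s_j$ to cluster near $a_{\ph(k)}/\text{(something)}$ in a way incompatible with $S'$ being a set of $k-1$ distinct integers lying below $a_{\ph(k)}$, together with the three $A_2$-elements forcing a spread. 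The main obstacle I anticipate is precisely this bookkeeping: pinning down that the ``bad'' case (many new sums, few new differences) cannot occur while also maintaining that $S$ is an MSTD set --- one has to use the MSTD hypothesis $|S+S| > |S-S|$ itself to rule it out, since the statement allows $S$ not to be MSTD as an escape hatch. I expect the final form of the argument to be: either $S$ is not MSTD (done), or enough new differences appear that the gap strictly increases (done), and showing these two cases are exhaustive is the crux.
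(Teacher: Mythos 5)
There is a genuine gap. You correctly identify the target --- at most $k+1$ new sums versus at least $k+2$ new differences, so that the gap $|S-S|-|S+S|$ strictly increases --- but you only manage to certify $6$ new differences (from the three anchors $a_{r_1},a_{r_2},a_{r_3}$ and their negatives), and the mechanism you propose for closing the remaining distance (a clustering/bookkeeping argument in the ``many new sums, few new differences'' regime, invoking the MSTD hypothesis as an escape hatch) is not carried out and is not how the count actually closes. The missing observation is that the chain of inequalities applies not just to the three $A_2$-anchors but to roughly \emph{half} of $S'$: since $S'$ consists of $k-1$ distinct elements of $A$, each equal to some $a_j$ with $j<\ph(k)$, the third-largest element satisfies $s_{k-3}\le a_{\ph(k)-3}$. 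Hence for $t=\lfloor (k+2)/2\rfloor$ (which satisfies $t\le k-3$ once $k\ge 8$) one gets
\begin{equation*}
a_{\ph(k)}-s_t \ \ge\ a_{\ph(k)}-s_{k-3} \ \ge\ a_{\ph(k)}-a_{\ph(k)-3} \ >\ a_{\ph(k)-1} \ \ge\ s_{k-1}-s_1,
\end{equation*}
so all $t$ differences $a_{\ph(k)}-s_j$, $j\le t$, exceed the largest difference in $S'-S'$, and with their negatives give $2t\ge k+2$ new differences. That is the whole proof for $k\ge 8$; the role of the three $A_2$-elements is only to guarantee $\ph(k)\ge s+3$ so the recurrence $a_{\ph(k)}>a_{\ph(k)-1}+a_{\ph(k)-3}$ is applicable.

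A second, smaller misconception: you anticipate needing the hypothesis $|S+S|>|S-S|$ to rule out a bad case, but the actual dichotomy in the lemma is purely on $k$. For $k\le 7$ the set $S$ has fewer than $8$ elements and so cannot be MSTD (first branch of the conclusion); for $k\ge 8$ the gap strictly increases unconditionally (second branch). No interplay between the two branches is needed, and the ``bad case'' you worry about never has to be analyzed.
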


\begin{proof}
We follow a similar argument as in Theorem \ref{thm:gen}.

If $k \le 7$, then $S$ is not a sum-dominant set.

If $k \ge 8$, then $k - \floor{\frac{k+3}{2}} \ge 3$. Let $t = \floor{\frac{k+2}{2}}$. Then $t \le k - 3$, and $s_{t} \le s_{k-3}$, and
\begin{align}
a_{\ph(k)} - s_{t} &\ \ge\ a_{\ph(k)} - s_{k-3} = a_{\ph(k)} - a_{\ph(k-3)}  \nonumber\\
                   &\ \ge\ a_{\ph(k)} - a_{\ph(k)-3} \nonumber\\
                   & \ >\ a_{\ph(k)-1} = a_{\ph(k)-1} - a_{1} & \text{(by assumption on $a$)} \nonumber\\
                   &\ \ge\ s_{k-1} - a_{1} \ge s_{k-1} - s_{1}.
\end{align}
In the set $S'$, the greatest difference is $s_{k-1}-s_1$. Since $a_{\ph(k)} - s_{t} \ge s_{k-1}-s_1$, we know that $a_{\ph(k)} - s_{t}, \dots, a_{\ph(k)} - s_{2}, a_{\ph(k)} - s_{1}$ are all differences greater than the greatest difference in $S'$.

By a similar argument, $s_{t} - a_{\ph(k)}, \dots, s_{2} - a_{\ph(k)}, s_{1} - a_{\ph(k)}$ are all differences smaller than the smallest difference in $S'$.

So $S$ contains at least $2t = 2\floor{\frac{k+3}{2}} > 2\cdot \frac{k+1}{2} = k+1$ new differences compared to $S'$, and $S$ satisfies
\be
|S-S| - |S+S| \ >\ |S' - S'| - |S' + S'|,
\ee completing the proof.
\end{proof}


\subsection{Proof of Theorem \ref{thm:finite}}


Recall that we write $A = A_1 \cup A_2$ with $A_1 = \{a_1$, $\dots$, $a_{s-1}\}$, $A_2 = \{a_{s}$, $a_{s+1}$, $\dots \}$, and by Corollary \ref{cor:abs} $A_2$ contains no sum-dominant sets (thus any sum-dominant set must contain some elements from $A_1$). We first prove a series of useful results which imply the main theorem.

Our first result classifies the possible sum-dominant subsets of $A$. Since any such set must have at least one element of $A_1$ in it but not necessarily any elements of $A_2$, we use the subscript $n$ below to indicate how many elements of $A_2$ are in our sum-dominant set.

\begin{lem}[Classification of Sum-Dominant Subsets of $A$]\label{lem:classificationA} Notation as above, let $K_n$ be a sum-dominant subset of $A = A_1 \cup A_2$ with $n$ elements in $A_2$. Thus we may write $$K_n\ =\ S \cup \{a_{r_1}, \dots, a_{r_n}\}$$ for some $$S \ \subset \ A_1 \ = \ \{a_1, \dots, a_s\}, \ \ \ s \le r_1 < r_2 < \dots < r_n.$$ Set $$d\ =\ \max_{K_3} (|K_3 + K_3| - |K_3 - K_3|, 1).$$ Then $n \le d+3$. In other words, a sum-dominant subset of $A$ can have at most $d+3$ elements of $A_2$. \end{lem}

\begin{proof} Let $S_m$ be any subset of $A$ with $m$ elements of $A_2$. Lemma \ref{lem:add} tells us that for any $S_m$ with $m \ge 3$, when we add any new element $a_{r_{m+1}}$ to get $S_{m+1}$, either $S_{m+1}$ is not a sum-dominant set, or $$|S_{m+1}-S_{m+1}| - |S_{m+1}+S_{m+1}| \ \ge\ |S_m -   S_m| - |S_m + S_m| + 1.$$

For an $n > d+3$, assume there exists a sum-dominant set; if so, denote it by $K_n$. For $3 \le k \le n$, define $S_k$ as the set obtained by deleting the $(n-k)$ largest elements from $K_n$ (equivalently, keeping only the $k$ smallest elements from $K_n$ which are in $A_2$). We prove that each $S_k$ is sum-dominant, and then show that this forces $S_n$ not to be sum-dominant; this contradiction proves the theorem as $K_n = S_n$.

If $S_k$ is not a sum-dominant set for any $k \ge 3$, by Lemma \ref{lem:add} either $S_{k+1}$ is not a sum-dominant set, or $$|S_{k+1} - S_{k+1}| - |S_{k+1} + S_{k+1}|\ \ge \ |S_k - S_k| - |S_k + S_k| + 1\ \ge \ 0,$$ in which case $S_{k+1}$ is also not a sum-dominant set (because $S_k$ is not sum-dominant, the set $S_{k+1}$ generates at least as many differences as sums). As we are assuming $K_n$ (which is just $S_n$) is a sum-dominant set, we find $S_{n-1}$ is sum-dominant. Repeating the argument, we find that $S_{n-2}$ down to $S_3$ must also all be sum-dominant sets, and we have
\be
|S_n - S_n| - |S_n + S_n|\ \ge \ |S_3 - S_3| - |S_3 + S_3| + (n-3).
\ee
Since $S_3$ is one of the $K_3$'s (i.e., it is a sum-dominant subset of $A$ with exactly three elements of $A_2$), by the definition of $d$ the right hand side above is at least $n-3-d$. As we are assuming $n > d+3$ we see it is positive, and hence $S_n$ is not sum-dominant. As $S_n = K_n$ we see that $K_n$ is not a sum-dominant set, contradicting our assumption that there is a sum-dominant set $K_n$ with $n > d+3$, proving the theorem.
\end{proof}

%
%
%

\begin{lem}\label{lem:precursortothm} For $n \ge 0$ let $k_n$ denote the number of subsets $K_n \subset A$ which are sum-dominant and contain exactly $n$ elements from $A_2$. We write \begin{equation}\label{eq:writingKn} K_n \ = \ S \cup \{a_{r_1}, \dots, a_{r_n}\} \ \ \ {\rm with}\ S \subset A_1.\end{equation} Then
\begin{enumerate}
\item $k_n$ is finite for all $n \ge 0$, and
\item every $K_n$ is not a special sum-dominant set.
\end{enumerate}
\end{lem}

\begin{proof} We prove each part by induction. It is easier to do both claims simultaneously as we induct on $n$. We break the analysis into $n \in \{0, 1, 2, 3\}$ and $n \ge 4$. The proof for $n=0$ is immediate, while $n \in \{1,2,3\}$ follow by obtaining bounds on the indices permissible in a $K_n$, and then $n \ge 4$ follows by induction. We thus must check (1) and (2) for $n \le 3$. While the arguments for $n \le 3$ are all similar, it is convenient to handle each case differently so we can control the indices and use earlier results, in particular removing the largest element in $A_2$ yields a set which is not a special sum-dominant set. \\ \


\noindent \emph{Case $n=0$:} As $A_1$ is finite, it has finitely many subsets and thus $k_0$, which is the number of sum-dominant subsets of $A_1$, is finite (it is at most $2^{|A_1|}$). Further any $K_0$ is a subset of $$ A_1\ =\ \{a_1, \dots, a_{s-1}\},$$ which is a subset of \begin{equation}\label{eq:Aprime} A'\ = \ \{a_1, \dots, a_{4s+6}\}.\end{equation} As we have assumed $A'$ has no special sum-dominant set, no $K_0$ can be a special sum-dominant set. \\ \


\noindent \emph{Case $n=1$:} We start by obtaining upper bounds on $r_1$, the index of the smallest (and only) element in our set coming from $A_2$. Consider the index $4s$. We claim that
\begin{equation} \label{eq:s-1}
a_{4s} \ >\ \sum_{a \in A_1} a.
\end{equation}

This is because $|A_1| < s$ and $a_k > a_{k-1} + a_{k-3}$ for all $k\ge s$, and hence
\begin{align*} \label{eq:s-1}
\sum_{a \in A_1} a
& \ <\ s\cdot a_{s}  \nonumber\\
& \ <\  \frac{s}{2} \left(a_{s} + a_{s+2} \right)
  \ <\  \frac{s}{2} \cdot a_{s+3} \nonumber\\
& \ <\  \frac{s}{4} \left(a_{s+3} + a_{s+5} \right)
  \ <\  \frac{s}{4} \cdot a_{s+6} \dots  \nonumber\\
& \ <\  \frac{s}{2^{\lceil \log_2 s\rceil}} a_{s+3\lceil\log_2(s)\rceil} \nonumber\\
& \ <\  a_{s+3s}  \ =\ a_{4s}
\end{align*} (by doing the above $\lceil\log_2 s\rceil$ times we ensure that $s/2^{\lceil \log_2 s\rceil} < 1$, and since $s \ge 1$ we have $3s \ge 3 \lceil\log_2(s)\rceil$). Therefore for all $r_{1}$ sufficiently large,
\begin{equation} \label{eq:t-1}
a_{r_{1}}  \ >\ a_{4s} \ > \ \sum_{a \in A_1} a.
\end{equation}

Clearly there are only finitely many sum-dominant subsets $K_1$ with $r_1 \le 4s$; the analysis is completed by showing there are no sum-dominant sets with $r_1 > 4s$. Imagine there was a sum-dominant $K_1$ with $a_{r_1} > a_{4s}$. Then $K_1$ is the union of a set of elements $S=\{s_1, \dots, s_m\}$ in $A_1$ and $a_{r_1}$ in $A_2$. As $\sum_{s\in S} s  < a_{r_1}$, by Lemma \ref{lem:appendingelementtonotspecial} we find $K_1$ is not a sum-dominant set.

All that remains is to show none of the $K_1$ are special sum-dominant sets. This is immediate, as each sum-dominant $K_1$ is a subset of $\{a_1, \dots, a_{4s}\}$, which is a subset of $A'$ (defined in \eqref{eq:Aprime}). As we have assumed $A'$ has no special sum-dominant set, no $K_1$ can be a special sum-dominant set. \\ \

\noindent \emph{Case $n=2$:} Consider the index $4s+3$. If $K_2$ is a sum-dominant set then it has two elements, $a_{r_1} < a_{r_2}$, that are in $A_2$. We show that if $r_2 \ge 4s+3$ then there can be no sum-dominant sets, and thus there are only finitely many $K_2$.

For all $r_{2} \ge 4s+3$,
\begin{equation} \label{eq:t-2}
a_{r_2} - a_{r_2 - 1} \ >\ a_{r_2-3} \ \ge\ a_{4s} \ >\ \sum_{a \in A_1} a.
\end{equation}

Assume there is a sum-dominant $K_2$ with $r_2 \ge 4s+3$. It contains some elements $S=\{s_1, \dots, s_m\}$ in $A_1$ and $a_{r_1}, a_{r_2}$ in $A_2$. We have $$a_{r_2} - a_{r_1}\ \ge\ a_{r_2} - a_{r_2-1} \ >\ \sum_{a\in S} a.$$ Therefore $a_{r_2} > \left( \sum_{a\in S} a\right) + a_{r_1}$, and $S\cup\{a_{r_1}\}$ is not a special sum-dominant set by the $n=1$ case\footnote{If $S' = S \cup \{a_{r_1}\}$ is sum-dominant then it is not special, while if it is not sum-dominant then clearly it is not a special sum-dominant set.}. Hence, by Lemma \ref{lem:appendingelementtonotspecial} we find $K_2 = (S\cup \{a_{r_1}\}) \cup \{a_{r_2}\}$ is not a sum-dominant set.

Finally, as $K_2$ is a subset of $\{a_1, \dots, a_{4s+1}\}$, which is a subset of $A'$, by assumption $K_2$ is not a special sum-dominant set. \\ \

\noindent \emph{Case $n=3$:} Let $K_3$ be a sum-dominant set with three elements from $A_2$. We show that if $r_3 \ge 4s+6$ then there are no such $K_3$; as there are only finitely many sum-dominant sets with $r_3 < 4s+6$, this completes the counting proof in this case.

Consider the index $4s+6$. For all $r_{3} \ge 4s+6$,
\begin{equation} \label{eq:t-3}
a_{r_3-3} - a_{r_3-4} \ >\ a_{r_3-6} \ \ge\ a_{4s} \ >\ \sum_{a \in A_1} a.
\end{equation}

Consider any $K_3$ with $r_3 \ge 4s+6$. We write $K_3$ as $S \cup \{a_{r_1}, a_{r_2}, a_{r_3}\}$ and $S \subset A_1$. If $|S| < 5$, we know that $|K_3| < 8$, and $K_3$ is not a sum-dominant set as such a set  has at least 8 elements. We can therefore assume that $|S| \ge 5$. We have two cases.\\ \

\noindent \emph{Subcase 1: $r_2 \le r_3-3$:} Thus $$a_{r_3} - a_{r_2} - a_{r_1}\ \ge\ a_{r_3} - a_{r_3-3} - a_{r_3-4}\ \ge\ a_{r_3-1} - a_{r_3-4}\ \ge\ a_{r_3 -2} > a_{r_3 -6} \ >\ \sum_{a \in S} a.$$ As $S\cup\{a_{r_1}, a_{r_2}\}$ is not a special sum-dominant set by the $n=2$ case\footnote{As before, if it is sum-dominant it is not special, while if it is not sum-dominant it cannot be sum-dominant special; thus we have the needed inequalities concerning the sizes of the sets.}, adding $a_{r_3}$ with $$a_{r_3}\ >\ \left( \sum_{s\in S} s\right) + a_{r_1} + a_{r_2}$$ creates a non-sum-dominant set by Lemma \ref{lem:appendingelementtonotspecial}. \\ \

\noindent \emph{Subcase 2: $r_2 > r_3 - 3$:} Using \eqref{eq:t-3} we find $$a_{r_3} - a_{r_2}\ \ge\ a_{r_3} - a_{r_3-1}\ >\ \sum_{a\in S} a$$ and $$a_{r_2} - a_{r_1} \ > \ a_{r_3-2} - a_{r_3-3}\ >\ \sum_{a\in S} a.$$ Therefore the differences between $a_{r_1}$, $a_{r_2}$, $a_{r_3}$ are large relative to the sum of the elements in $S$, and our new sums and new differences are well-separated from the old sums and differences. Explicitly, $K_3 + K_3$ consists of $S + S$, $a_{r_1} + S$, $a_{r_2} + S$, $a_{r_3} + S$, plus at most 6 more elements (from the sums of the $a_r$'s), while $K_3 - K_3$ consists of $S-S$, $\pm(a_{r_1} - S)$, $\pm(a_{r_2} - S)$, $\pm(a_{r_3} - S)$, plus possibly some differences from the differences of the $a_r$'s.

As $S$ is not a special sum-dominant set, we know $|S+S| - |S-S| < |S|$ (if $S$ is not sum-dominant the claim holds trivially, while if it is sum-dominant it holds because $S$ is not special). Thus for $K_3$ to be sum-dominant, we must have
\begin{eqnarray} 0 & \ < \ & |K_3 + K_3| - |K_3 - K_3| \nonumber\\ & \le & \left(|S+S| + 3|S| + 6\right) - \left(|S-S| + 6|S|\right) \nonumber\\ & < & 6 -2|S|; \nonumber
\end{eqnarray} as $|S| \ge 5$ this is impossible, and thus $K_3$ cannot be sum-dominant.

%
%
%
%

Finally, as again $K_3$ is a subset of $A' = \{a_1, \dots, a_{4s+6}\}$, no $K_3$ is a special sum-dominant set.\\ \

\noindent \emph{Case $n\ge 4$ (inductive step):} We proceed by induction. We may assume that $k_n$ is finite for some $n\ge 3$, and must show that $k_{n+1}$ is finite. By the earlier cases we know there is an integer $t_n$ such that if $K_n$ is a sum-dominant subset of $A$ with exactly $n$ elements of $A_2$, then the largest index $r_n$ of an $a_i \in K_n$ is less than $t_n$.

We claim that if $K_{n+1}$ is a sum-dominant subset of $A$ then each index is less than $t_{n+1}$, where
%
%
$t_{n+1}$ is the smallest index such that if $r_{n+1} \ge t_{n+1}$ then
\begin{equation} \label{eq:t}
a_{r_{n+1}} \ >\ \sum_{i < r_{n}} a_i.
\end{equation}

We write $$K_{n+1} \ = \ S \cup \{a_{r_1}, \dots, a_{r_n}, a_{r_{n+1}}\}, \ \ S \subset A_1, \ \ \{a_{r_1}, \dots, a_{r_n}\} \subset A_2.$$ We show that if $r_{n+1} \ge t_{n+1}$ then $K_{n+1}$ is not sum-dominant. Let $S_n = K_{n+1} \setminus \{a_{r_{n+1}}\}$. We have two cases.

\begin{itemize}
\item If $r_n < t_{n}$, then by the inductive hypothesis $S_n$ is not a special sum-dominant set. So adding $a_{r_{n+1}} > \sum_{x\in S_{n}} x$ to $S_n$ gives a non-sum-dominant set by Lemma \ref{lem:appendingelementtonotspecial}.

\item If $r_n \ge t_{n}$, then by the inductive hypothesis $S_n$ is not a sum-dominant set. So $|S_n - S_n| - |S_n + S_n| \ge 0$. Since $n \ge 3$, we can apply Lemma \ref{lem:add}, and either $K_{n+1} = S_n \cup \{a_{r_{n+1}}\}$ is not a sum-dominant set, or $$|K_{n+1} - K_{n+1}| - |K_{n+1} + K_{n+1}|\ >\ |K_n - K_n| - |K_n + K_n| > 0,$$ in which case $S_{n+1}$ is still not a sum-dominant set.

\end{itemize}

We conclude that for all sum-dominant sets $S_{n+1}$, we must have $r_{n+1} < t_{n+1}$. So $k_{n+1}$ is finite.

Consider any sum-dominant set $K_{n+1} = S_n \cup \{a_{r_{n+1}}\}$. Applying lemma \ref{lem:add} again, we have $|K_{n+1} - K_{n+1}| - |K_{n+1} + K_{n+1}| > |S_n - S_n| - |S_n + S_n|$. We know, from inductive hypothesis, that $S_n$ is not a special sum-dominant set. Therefore all possible $K_{n+1}$ are not special sum-dominant sets.

By induction, $k_n$ is finite for all $n\ge 0$, and all $K_n$ are not special sum-dominant sets.
\end{proof}


\begin{proof}[Proof of Theorem \ref{thm:finite}]
By Lemma \ref{lem:classificationA} every sum-dominant subset of $A$ is of the form $K_0$, $K_1$, $K_2$, $\dots$, $K_{d+3}$ where the $K_n$ are as in \eqref{eq:writingKn}. By Lemma \ref{lem:precursortothm} there are only finitely many sets of the form $K_n$ for $n \le d+3$, and thus there are only finitely many sum-dominant subsets of $A$.
\end{proof}

\section{Sum-Dominant subsets of  the prime numbers}\label{sec:primes}

We now investigate sum-dominant subsets of the primes. While Theorem \ref{thm:prime} follows immediately from the Green-Tao theorem, we first conditionally prove there are infinitely many sum-dominant subsets of the primes as this argument gives a better sense of what the `truth' should be (i.e., how far we must go before we find sum-dominant subsets).

\subsection{Admissible Prime Tuples and Prime Constellations}

We first consider the idea of prime $m$-tuples. A prime $m$-tuple $(b_1, b_2, \dots, b_m)$ represents a pattern of differences between prime numbers. An integer $n$ matches this pattern if $(b_1 + n, b_2 + n, \dots, b_m + n)$ are all primes.

A prime $m$-tuple $(b_1, b_2, \dots, b_m)$ is called admissible if for all integers $k \ge 2$, $\{b_1, b_2, \dots, b_m\}$ does not cover all values modulo $k$. If a prime $m$-tuple is not admissible, whenever $n > k$ then at least one of $b_1 + n, b_2 + n, \dots, b_m + n$ is divisible by $k$ and greater than $k$, so this cannot be an $m$-tuple of prime numbers (in this case the only $n$ which can lead to an $m$-tuple of primes are $n \le k$, and there are only finitely many of these).


It is conjectured in \cite{HL} that all admissible $m$-tuples are matched by infinitely many integers.

\begin{conj}[Hardy-Littlewood \cite{HL}]\label{conj:HL}
Let $b_1, b_2, \dots, b_m$ be $m$ distinct integers, $v_p(b) = v(p; b_1, b_2, \dots , b_m)$ the number of distinct residues of $b_1, b_2, \dots b_m$  to the modulus $p$, and $P(x$; $b_1$, $b_2$, $\dots$, $b_m)$ the number of integers $1 \le n \le x$ such that every element in $\{n + b_1, n + b_2, \dots , n + b_m\}$ is prime. Assume $(b_1, b_2, \dots, b_m)$ is admissible (thus $v_p(b) \neq p$ for all $p$). Then
\begin{equation}\label{eq:HLforPx}
P(x) \ \sim \ \mathfrak{S}(b_1, b_2, \dots, b_m) \int_2^x \frac{du}{(\log u)^m}
\end{equation}
when $x \to \infty$, where
$$
\mathfrak{S}(b_1, b_2, \dots, b_m)\ =\ \prod_{p \ge 2} \left(\left( \frac{p}{p-1}\right)^{m-1} \frac{p-v_p(b)}{p-1} \right) \ \neq \ 0.
$$
\end{conj}
As $(b_1, b_2, \cdots, b_m)$ is an admissible $m$-tuple, $v(p; b_1, b_2, \dots, b_m)$ is never equal to $p$ and equals $m$ for $p> \max\{|b_i-b_j|\}$. The product $\mathfrak{S}(b_1, b_2, \dots, b_m)$ thus converges to a positive number as each factor is non-zero and is $1 + O_m(1/p^2)$. Therefore this conjecture implies that every admissible $m$-tuple is matched by infinitely many integers.

\subsection{Infinitude of sum-dominant subsets of the primes}

We now show the Hardy-Littlewood conjecture implies there are infinitely many subsets of the primes which are sum-dominant sets.

\begin{thm}\label{thm:HLprimes} If the Hardy-Littlewood conjecture holds for all admissible $m$-tuples then the primes have infinitely many sum-dominant subsets. \end{thm}

\begin{proof}
Consider the smallest sum-dominant set $S=\{0, 2, 3, 4, 7, 11, 12, 14\}$. We know that $\{p, p+2s, p+3s, p+4s, p+7s, p+11s, p+12s, p+14s\}$ is a sum-dominant set for all positive integers $p, s$. Set $s=30$ and let $T = (0, 60, 90, 120, 210, 330, 360, 420)$. We deduce that if there are infinitely many $n$ such that $n+T = (n, n+60, n+90, n+120, n+210, n+330, n+360, n+420)$ is an 8-tuple of prime numbers, then there are infinitely many sum-dominant sets of prime numbers.

We check that $T$ is an admissible prime 8-tuple. When $m>8$, the eight numbers in $T$ clearly don't cover all values modulo $m$. When $m \le 8$, one sees by straightforward computation that $T$ does not cover all values modulo $m$.

By Conjecture \ref{conj:HL}, there are infinitely many integers $p$ such that every element of $\{p, p+60, p+90, p+120, p+210, p+330, p+360, p+420\}$ is prime. These are all sum-dominant sets, so there are infinitely many sum-dominant sets on primes.
\end{proof}

Of course, all we need is that the Hardy-Littlewood conjecture holds for one admissible $m$-tuple which has a sum-dominant subset. We may take $p=19$, which gives an explicit sum-dominant subset of the primes: $\{19, 79, 109, 139, 229, 349, 379, 439\}$ (a natural question is which sum-dominant subset of the primes has the smallest diameter). If one wishes, one can use the conjecture to get some lower bounds on the number of sum-dominant subsets of the primes at most $x$. The proof of Theorem \ref{thm:prime} follows similarly.

\begin{proof}[Proof of Theorem \ref{thm:prime}] By the Green-Tao theorem, the primes contain arbitrarily long arithmetic progressions. Thus for each $N \ge 14$ there are infinitely many pairs $(p,d)$ such that \begin{equation} \{p, p + d, p + 2d, \dots, p + Nd\} \end{equation} are all prime. We can then take subsets as in the proof of Theorem \ref{thm:HLprimes}.
\end{proof}


\section{Future Work}

We list some natural topics for further research.

\begin{itemize}

\item Can the conditions in Theorem \ref{thm:gen} or \ref{thm:finite} be weakened?

\item What is the smallest special sum-dominant set by diameter, and by cardinality? 

\item What is the smallest, in terms of its largest element, set of primes that is sum-dominant?

\end{itemize}


\bigskip


\begin{thebibliography}{AAAAAAA}


\bibitem[DKMMW]{DKMMW}
T. Do, A. Kulkarni, S. J. Miller, D. Moon and J. Wellens,
\emph{Sums and Differences of Correlated Random Sets}, Journal of Number Theory \textbf{147} (2015), 44--68.

\bibitem[DKMMWW]{DKMMWW}
T. Do, A. Kulkarni, S. J. Miller, D. Moon, J. Wellens and J. Wilcox,
\emph{Sets Characterized by Missing Sums and Differences in Dilating Polytopes}, Journal of Number Theory \textbf{157} (2015), 123--153.


\bibitem[FP]{FP}
G. A. Freiman and V. P. Pigarev, \emph{The relation between the invariants R and T}, In:
Number theoretic studies in the Markov spectrum and in the structural theory of set
addition (Russian), pp. 172--174. Kalinin. Gos. Univ., Moscow (1973).

\bibitem[GT]{GT}
B. Green and T. Tao, \emph{The primes contain arbitrarily long arithmetic progressions}, Annals
of Mathematics \textbf{167} (2008), no. 2, 481--547.

\bibitem[HL]{HL}
G. H. Hardy and J. Littlewood, \emph{Some Problems of `Partitio Numerorum'; III: On the Expression of a Number as a Sum of Primes},
Acta Math. \textbf{44}, 1--70.

\bibitem[He]{He}
P. V. Hegarty, \emph{Some explicit constructions of sets with more sums than differences} (2007), Acta Arithmetica \textbf{130} (2007), no. 1, 61--77.

\bibitem[HM]{HM}
P. V. Hegarty and S. J. Miller, \emph{When almost all sets are difference dominated}, Random Structures and Algorithms \textbf{35} (2009), no. 1, 118--136.

\bibitem[ILMZ]{ILMZ}
G. Iyer, O. Lazarev, S. J. Miller and L. Zhang, \emph{Generalized more sums than differences sets,} Journal of Number Theory \textbf{132} (2012), no. 5, 1054--1073

\bibitem[Ma]{Ma}
J. Marica, \emph{On a conjecture of Conway}, Canad. Math. Bull. \textbf{12} (1969), 233--234.

\bibitem[MO]{MO}
G. Martin and K. O'Bryant, \emph{Many sets have more sums than differences}, in Additive Combinatorics, CRM Proc. Lecture Notes, vol. 43, Amer. Math. Soc., Providence, RI, 2007, pp. 287--305.

\bibitem[MOS]{MOS}
S. J. Miller, B. Orosz and D. Scheinerman, \emph{Explicit constructions of infinite families of MSTD sets}, Journal of Number Theory \textbf{130} (2010) 1221--1233.

\bibitem[MS]{MS}
S. J. Miller and D. Scheinerman, \emph{Explicit constructions of infinite families of mstd sets,} Additive Number Theory, Springer, 2010, pp. 229-248.

\bibitem[MPR]{MPR}
S. J.  Miller, S. Pegado and L. Robinson, \emph{Explicit Constructions of Large Families of Generalized More Sums Than Differences Sets}, Integers \textbf{12} (2012), \#A30.

\bibitem[MV]{MV}
S. J. Miler and K. Vissuet, \emph{Most Subsets are Balanced in Finite Groups}, Combinatorial and Additive Number Theory, CANT 2011 and 2012 (Melvyn B. Nathanson, editor), Springer Proceedings in Mathematics \& Statistics (2014), 147--157.

\bibitem[Na1]{Na1}
M. B. Nathanson, \emph{Sums of finite sets of integers},
The American Mathematical Monthly, Vol. $79$, No. $9$ (Nov., 1972), pp. 1010-1012.


\bibitem[Na2]{Na2}
M. B. Nathanson, \emph{Problems in additive number theory, 1},
Additive combinatorics, 263--270, CRM Proc. Lecture Notes \textbf{43},
Amer. Math. Soc., Providence, RI, 2007.

\bibitem[Na3]{Na3}
M. B. Nathanson, \emph{Sets with more sums than differences},
Integers : Electronic Journal of Combinatorial Number Theory \textbf{7} (2007), Paper A5 (24pp).

\bibitem[Ru1]{Ru1}
I. Z. Ruzsa, \emph{On the cardinality of $A + A$ and $A - A$}, Combinatorics year (Keszthely, 1976), vol. 18, Coll. Math. Soc. J. Bolyai, North-Holland-Bolyai T$\grave{{\rm a}}$rsulat, 1978, 933--938.

\bibitem[Ru2]{Ru2}
I. Z. Ruzsa, \emph{Sets of sums and differences}. In: S\'eminaire de Th\'eorie des Nombres de
Paris 1982-1983, pp. 267--273. Birkh\"auser, Boston (1984).

\bibitem[Ru3]{Ru3}
I. Z. Ruzsa, \emph{On the number of sums and differences}, Acta Math. Sci. Hungar. \textbf{59} (1992), 439--447.

\bibitem[Zh1]{Zh1}
Y. Zhao, \emph{Constructing MSTD sets using bidirectional ballot sequences}, Journal of Number Theory \textbf{130} (2010), no. 5, 1212--1220.

\bibitem[Zh2]{Zh2}
Y. Zhao, \emph{Counting MSTD sets in finite abelian groups}, Journal of Number Theory \textbf{130} (2010), no. 10, 2308--2322.

\bibitem[Zh3]{Zh3}
Y. Zhao, \emph{Sets characterized by missing sums and differences}, Journal of Number Theory \textbf{131} (2011), no. 11, 2107--2134.

\end{thebibliography}
\end{document}